\documentclass[letterpaper, 10pt, conference]{ieeeconf}      

\IEEEoverridecommandlockouts                             

\overrideIEEEmargins

\usepackage[letterpaper,left=54pt,right=54pt,top=54pt,bottom=54pt]{geometry}

\usepackage{graphics} 
\usepackage{epsfig} 
\usepackage{times} 
\usepackage{amsmath} 
\usepackage{amssymb}  
\usepackage{tikz}
\usepackage{mathtools}
\usepackage{comment}

\def\P{\mathcal{P}}
\def\C{\mathcal{C}}
\def\B{\mathcal{B}}

\allowdisplaybreaks[4]

\newtheorem{lemma}{Lemma}[section]
\newtheorem{definition}{Definition}[section]

\title{\LARGE \bf
An ADMM Algorithm for MPC-based Energy Management in Hybrid Electric Vehicles with Nonlinear Losses
}


\author{Sebastian East and Mark Cannon
\thanks{This work was funded by the UK Engineering and Physical Sciences Research Council.}%
\thanks{S.~East and M.~Cannon are with the Department of Engineering Science, University of Oxford, OX1 3JP, {\tt\footnotesize sebastian.east@eng.ox.ac.uk},}%
\thanks{{\tt\footnotesize mark.cannon@eng.ox.ac.uk}}%
}

\begin{document}

\maketitle
\thispagestyle{empty}
\pagestyle{empty}

\begin{abstract}

In this paper we present a convex formulation of the Model Predictive Control (MPC) optimisation for energy management in hybrid electric vehicles, and an Alternating Direction Method of Multipliers (ADMM) algorithm for its solution. We develop a new proof of convexity for the problem that allows the nonlinear dynamics to be modelled as a linear system, then demonstrate the performance of ADMM in comparison with Dynamic Programming (DP) through simulation. The results demonstrate up to two orders of magnitude improvement in solution time for comparable accuracy against DP.

\end{abstract}

\section{INTRODUCTION}
Automotive powertrains have become increasingly electrified in recent decades, in an effort to reduce both hydrocarbon consumption and tailpipe emissions \cite{Malikopoulos2014}. Hybrid electric vehicles are a common configuration as they can reduce fuel consumption through regenerative braking, engine shut down, and reduced engine size, but the second power source introduces an additional degree of freedom that makes the powertrain underconstrained, so the fraction of power delivered by the engine and motor must be controlled. The total energy consumed during a journey can be significantly reduced by varying this fraction \cite{Sciarretta2007}, but a control system that is optimal (in the sense of minimising fuel consumption) whilst also being implementable is still an open problem \cite{MarinaMartinez2016}.

Model Predictive Control (MPC) is a control framework where for each control input update, the solution of an open-loop finite-horizon optimal control problem is solved online, and the first element of the optimal predicted control vector is implemented as the current control input. It has been applied to the hybrid vehicle energy management problem as the optimisation can explicitly account for constraints on system variables, and the recursive feedback provides a degree of robustness to modelling, measurement, and disturbance uncertainty \cite{Buerger,Xiang2017}.

An important issue with MPC is the online solution of the optimal control problem: it must capture the important dynamics of the controlled system, but the optimisation problem may be high-dimensional, constrained, and possibly nonconvex. Dynamic Programming (DP) has been used to provide the globally minimising argument \cite{Sun2015a}, however the approach is too slow to be implemented in real time and is computationally expensive. Conversely, linearisation has been used to reduce the optimisation to a quadratic program \cite{Borhan2012a}, which is a well-known problem and can be solved rapidly, however the linearisation process renders the applied control inputs sub-optimal. To alleviate these difficulties, the problem can be reformulated as a convex optimisation \cite{Egardt2014,Hadj-Said2016,Nuesch2014}. This allows the dynamics of the system to be accurately modelled using convex functions, and enables a guarantee of convergence to the global minimum. 

The first contribution of this paper is an alternative proof of convexity for the energy management problem. Previous work has achieved convexity by relaxing the system dynamic constraints to inequalities, and demonstrating that the optimal solution is obtained when the dynamic constraints are satisfied with equality \cite{Egardt2014}. We instead show that the problem is convex when expressed in terms of battery power output. This modification allows the dynamics to be represented without approximation as a linear system, which simplifies the implementation of convex optimisation algorithms.

The second contribution is an Alternating Direction Method of Multipliers (ADMM) \cite{Wang2014} algorithm for the solution of the convex problem. Our work in \cite{Buerger} demonstrated a projected Newton method that only considers the terminal state of charge constraint, and other work in the area has made use of generic convex optimisation solvers; the method presented here enforces state of charge constraints at all times, and is specifically designed to exploit separability of the cost and dynamics. We have previously demonstrated the properties of the algorithm without the assumption of convexity in \cite{East2018}. Here, we show the comparative performance against DP in a convex formulation through simulation.

The paper is organised as follows: Chapter 2 describes an MPC framework for energy management, and Chapter 3 provides a proof of convexity for the associated optimisation problem. Chapter 4 details the DP and ADMM algorithms, Chapter 5 presents numerical simulations comparing their performance, and Chapter 6 provides conclusions. 


\section{Model Predictive Control Framework}
At each time-instant that the control variable is updated, we assume that a prediction is made of future vehicle velocity, $\hat{v} = (\hat{v}_0,\dots,\hat{v}_{N-1})$, and road gradient, $\hat{\theta} = (\hat{\theta}_0,\dots,\hat{\theta}_{N-1})$, at a sampling frequency of 1Hz and a horizon length of $N$. The first derivative of $\hat{v}$ is approximated using central difference, and the predicted driver power demand $\hat{P}_{drv} = (\hat{P}_{drv,0},\dots,\hat{P}_{drv,N-1})$ can be calculated using a backwards-causal longitudinal model
\begin{equation} \label{equation_driver_power_demand}
\begin{aligned}
\hat{P}_{drv,k} =& \bigl[ m\dot{\hat{v}}_k + \frac{1}{2}\rho_a \hat{v}_k^2c_dA + c_{r}mg \cos \hat{\theta}_k + mg \sin \hat{\theta}_k \bigr]\hat{v}_k,
\end{aligned}
\end{equation}
where $m$ is the vehicle mass, $\rho_a$ is air density, $c_d$ is the drag coefficient, $A$ is the frontal area, $c_r$ is the rolling resistance coefficient, and $g$ is the acceleration due to gravity. 
 
Figure \ref{fig_powertrain_model} shows a simplified diagram of a parallel plug-in hybrid electric vehicle powertrain. Fuel power, $P_f$, is fed to an internal combustion engine that delivers mechanical power, $P_{eng}$, through a clutch, which can be disengaged so that the engine can idle (or be turned off) and the vehicle can be driven in an all-electric mode. Chemical power, $P_b$, from the battery is delivered as electrical power, $P_c$, through a circuit to a motor that delivers mechanical power, $P_{em}$, to the drivetrain. This flow of power can be reversed to charge the battery. It is assumed that all drivetrain components are 100\% mechanically efficient, so the motor and engine output powers are combined additively through a mechanical coupling device to provide the driver power demand, $P_{drv}$. We ignore the effect of high speed dynamics such as clutch engagement and engine lag. 

\begin{figure}
\centering
\scalebox{0.72}{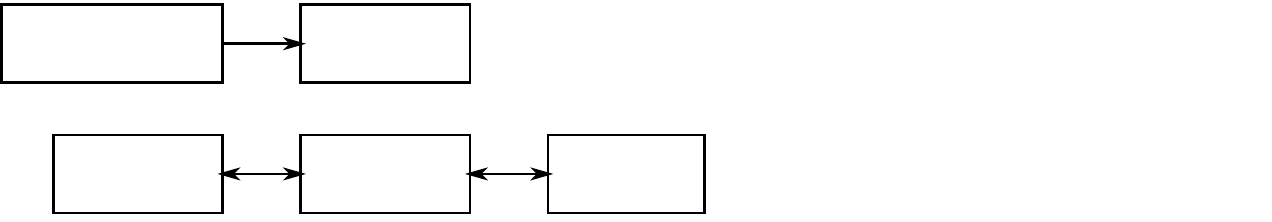}
\caption{Powertrain Model}
\label{fig_powertrain_model}
\end{figure} 

A controller is required to regulate the behaviour of this system as it is under-constrained in three degrees of freedom: the fraction of total driver demand power delivered from the engine, the clutch engagement, and the gear selection. Both the clutch disengagement and gear selection (assuming a discrete variable transmission) introduce integer decision variables that make the resulting control problem non-convex (and significantly more complex), so we assume here that these variables are determined by heuristics.

We assume a simple shifting strategy by which the powertrain speed $\omega_k$ can be found from
\begin{equation*} \label{equation_gear_selection}
\omega_k = k_i v_k, \hspace{5mm} \text{if} \hspace{5mm} \underline{v}_i \leq v_k \leq \overline{v}_i, \ i=1,\dots,N_g,
\end{equation*}
where $k_i$ are constants and $N_g$ is the number of gear ratios. We also assume that when the vehicle is braking, the clutch is open, the engine is idle, and the brakes provide a fixed fraction of the braking force $\gamma$. Finally, we assume that when the demand power is positive and the powertrain speed is lower than the minimum engine speed $\underline{\omega}_{eng}$, the clutch is open and the electric motor provides all of the driving power. Therefore, the engine power fraction only needs to be calculated when the driver power demand is positive, and the powertrain speed is greater than the minimum engine speed. These statements define the sets:
\begin{gather*}
\P = \{k | \hat{P}_{drv,k}>0, \hat{\omega}_k \geq \underline{\omega}_{eng} \}, \\
\C = \{ k | \hat{P}_{drv,k}>0 , \hat{\omega}_k < \underline{\omega}_{eng}  \}, \ 
\B = \{ k | \hat{P}_{drv,k}\leq 0 \},
\end{gather*}
from which predicted motor and engine speeds are given by
\begin{gather*}
\hat{\omega}_{em,k} = \hat{\omega}_k \  \forall \ k, \ \hat{\omega}_{eng,k} = \begin{cases} \hat{\omega}_k &  k \in \P \\
\underline{\omega}_{eng} &  k \notin \P \end{cases}.
\end{gather*}
The engine loss map is modelled as a set of convex quadratic functions of $P_{eng}$ of the form
\begin{equation*}
P_{f}(P_{eng},\omega_{eng}) = \alpha_{2,\omega_{eng}} P_{eng}^2 + \alpha_{1,\omega_{eng}} P_{eng} + \alpha_{0,\omega_{eng}},
\end{equation*}
for $\omega_{eng}$ in the interval $\underline{\omega}_{eng},\dots,\overline{\omega}_{eng}$, and the motor loss map is modelled using convex quadratic functions as
\begin{equation*}
P_{c}(P_{em},\omega_{em}) = \beta_{2,\omega_{em}} P_{em}^2 + \beta_{1,\omega_{em}} P_{em} + \beta_{0,\omega_{em}},
\end{equation*}
for $\omega_{em}$ in the interval $\underline{\omega}_{eng},\dots,\overline{\omega}_{eng}$. Loss functions are then found of the form 
\begin{alignat*}{3}
&\hat{P}_{f,k} &&= f_k(\hat{P}_{eng,k}) &&= \alpha_{2,k}\hat{P}_{eng,k}^2 + \alpha_{1,k}\hat{P}_{eng,k} + \alpha_{0,k}, \\
&\hat{P}_{c,k} &&= h_k(\hat{P}_{em,k}) &&= \beta_{2,k}\hat{P}_{em,k}^2 + \beta_{1,k}\hat{P}_{em,k} + \beta_{0,k},
\end{alignat*}
by linear interpolation between the loss map coefficients on the basis of $\hat{\omega}_{eng,k}$ and $\hat{\omega}_{em,k}$. The battery dynamics are 
\begin{equation*}
\hat{P}_{b,k} = g_k(\hat{P}_{em,k}) = \frac{V_{oc}^2}{2R} \left( 1 - \sqrt{1 - \frac{4R}{V_{oc}^2} h_k(\hat{P}_{em,k}) } \right)
\end{equation*}
where we assume constant $V_{oc}$ and $R$ for convexity. Limits on engine power are given for $k \in \P$ as
\begin{equation*}
\begin{aligned}
&\underline{\hat{P}}_{eng,k} &&= \max \{\underline{T}_{eng}\hat{\omega}_k, \hat{P}_{drv,k}- \min \{ \overline{T}_{em}\hat{\omega}_k, r^+_k \} \} \\
&\overline{\hat{P}}_{eng,k} &&= \min \{\overline{T}_{eng}\hat{\omega}_k, \hat{P}_{drv,k}-\underline{T}_{em}\hat{\omega}_k \},
\end{aligned}
\end{equation*}
where $\underline{T}$ and $\overline{T}$ are lower and upper limits on engine and motor torque, $r^+_k$ is the largest real root of $1 - \frac{4R}{V_{oc}^2} h_k(\hat{P}_{em,k})$, and upper and lower limits on battery state of charge are given by $\overline{E}$ and $\underline{E}$.
The MPC optimisation is then given by
\begin{equation}\label{eqn_opt}
\begin{aligned}
\hat{P}_{eng}^\star = \arg \min_{\hat{P}_{eng}} & \sum_{k=0}^{N-1} f_k(\hat{P}_{eng,k}) \\
\text{s.t.} \hspace{3mm} & \hat{E}_0 = E(t) \\
& \begin{aligned} &\hat{E}_{k+1} = \hat{E}_{k} -  g_k(\hat{P}_{em,k}) \\ 
&\underline{E} \leq \hat{E}_{k+1} \leq \overline{E}
\end{aligned} \Biggr\} && \forall \hspace{2mm} k \\
&  \begin{aligned}
&\hat{P}_{drv,k} = \hat{P}_{eng,k} + \hat{P}_{em,k} \\
&\underline{\hat{P}}_{eng,k} \leq \hat{P}_{eng,k} \leq \overline{\hat{P}}_{eng,k}
\end{aligned} \Biggr\} &&k \in \P \\
&\hat{P}_{eng,k} = \underline{\hat{P}}_{eng,k} &&k \notin \P \\
&\hat{P}_{em,k} = \hat{P}_{drv,k} &&k \in \C \\
&\hat{P}_{em,k} = \gamma \hat{P}_{drv,k} &&k \in \B\\
\end{aligned}
\end{equation}
and the first element of $\hat{P}^\star_{eng}$ is then applied as the control input to the engine.

\section{CONVEXITY}

We now demonstrate a convex reformulation of (\ref{eqn_opt}). Firstly, we restrict the domain of each $f_k(\hat{P}_{eng,k})$ and $g_k(\hat{P}_{em,k})$ so that they are non-decreasing, i.e impose new lower limits:
\begin{alignat*}{2}
&\underline{\hat{P}}^+_{eng,k} &&= \max \{ \underline{\hat{P}}_{eng,k} ,\ -\frac{\alpha_{1,k}}{2\alpha_{2,k}}   \}, \\
&\underline{\hat{P}}^+_{em,k} &&= \max \{ \hat{P}_{drv,k} - \overline{\hat{P}}_{eng,k} ,\ -\frac{\beta_{1,k}}{2\beta_{2,k}}   \}.
\end{alignat*}
This is expected as an increase in motor or engine output power would require an increase in input power, and it is an assumption also made in \cite{Egardt2014}. This domain restriction ensures that $g_k(\cdotp)$ is a one-to-one function, so
\begin{equation*}
\hat{P}_{b,k} = g_k(\hat{P}_{em,k}) \Leftrightarrow \hat{P}_{em,k} = g_k^{-1}(\hat{P}_{b,k}),
\end{equation*}
where $g_k^{-1}(\hat{P}_{b,k})$ is given by
\begin{align*}
g_k^{-1}(\hat{P}_{b,k}) = -\frac{\beta_{1,k}}{2\beta_{2,k}} + \sqrt{-\frac{R \hat{P}_{b,k}^2}{\beta_{2,k}V_{oc}^2} + \frac{\hat{P}_{b,k} - \beta_{0,k}}{\beta_{2,k}} + \frac{\beta_{1,k}^2}{4\beta_{2,k}^2}}.
\end{align*}
\begin{lemma}
\textit{Under the assumptions on $g_k(\cdotp)$ (convex, twice differentiable, non-decreasing, one-to-one), the inverse map $g_k^{-1}(\cdotp)$ is concave, twice differentiable, and increasing.}
\end{lemma}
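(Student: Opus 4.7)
The plan is to reduce all three conclusions to a single calculation: differentiate the defining identity $g_k(g_k^{-1}(y)) = y$ twice, and read off the sign and regularity of the derivatives of $h := g_k^{-1}$ from the corresponding quantities for $g_k$. The natural tool is the inverse function theorem, whose only nontrivial hypothesis in this setting is that $g_k'$ does not vanish on the interior of the restricted domain.

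First I would verify this hypothesis. Since $g_k$ is convex and twice differentiable, $g_k'$ is non-decreasing. Suppose for contradiction that $g_k'(x_0) = 0$ at some interior point $x_0$. Then for every $x \le x_0$ in the domain we have $g_k'(x) \le g_k'(x_0) = 0$, while the assumption that $g_k$ is non-decreasing gives $g_k'(x) \ge 0$. Hence $g_k' \equiv 0$ on an interval to the left of $x_0$, so $g_k$ is constant there, contradicting the one-to-one assumption. Therefore $g_k' > 0$ throughout the interior of the restricted domain.

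With this in hand, the inverse function theorem applied pointwise shows that $h$ is twice differentiable on the image of the interior, with
\begin{equation*}
h'(y) = \frac{1}{g_k'(h(y))} > 0,
\end{equation*}
which immediately gives that $h$ is increasing. Differentiating once more (equivalently, differentiating $g_k(h(y)) = y$ twice and solving for $h''$) yields
\begin{equation*}
h''(y) = -\frac{g_k''(h(y))\bigl(h'(y)\bigr)^2}{g_k'(h(y))}.
\end{equation*}
Because $g_k$ is convex we have $g_k''(\cdot) \ge 0$, and we have just shown $g_k'(\cdot) > 0$, so $h''(y) \le 0$ everywhere on the interior of the image, establishing concavity.

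The main obstacle is the small argument above ruling out zeros of $g_k'$ on the interior; without it the inverse function theorem does not apply directly and one cannot even write down $h'$, let alone differentiate it. The algebraic calculation of $h''$ and the sign check are then routine.
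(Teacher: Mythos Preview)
Your proposal is correct and follows essentially the same route as the paper: differentiate the identity $g_k(g_k^{-1}(y))=y$ to obtain $(g_k^{-1})'(y)=1/g_k'(g_k^{-1}(y))>0$ and $(g_k^{-1})''(y)=-g_k''/(g_k')^3\le 0$. The only difference is that you supply an explicit argument, from convexity and injectivity, that $g_k'>0$ on the interior, whereas the paper simply asserts the strict inequality when writing down $(g_k^{-1})'$; your extra step fills that small gap without changing the method.
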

\begin{proof}
The assumptions on $g_k(\cdotp)$ imply that $g'_k(x) \geq 0, \ g''_k(x) \geq 0 $ (where $g'$ and $g''$ are the first and second derivatives of $g$), and, if $y = g_k(x)$, then $x = g_k^{-1}(y)$ is unique for all $x$. Therefore
\begin{align*}
\frac{\text{d}y}{\text{d}x} = g'_k(x), \ & \frac{\text{d}x}{\text{d}y} = \frac{1}{g'_k(x)} = \frac{1}{g'_k(g_k^{-1}(y))} =(g^{-1}_k)'(y) > 0
\end{align*}
for all $y$, which demonstrates that $g_k^{-1}(\cdotp)$ is differentiable and monotonically increasing, and it is known that
\begin{align*}
\frac{\text{d}^2x}{\text{d}y^2} &= \frac{-1}{(g'_k(x))^2} \frac{\text{d}}{\text{d}y} \left( g'_k(x) \right) \\ &= \frac{-1}{(g'_k(x))^2}g''_k(x)\frac{\text{d}x}{\text{d}y} = \frac{-g''_k(g^{-1}_k(y)))}{\left( g'_k(g^{-1}_k(y)) \right)^3} \leq 0
\end{align*}
for all $y$, which demonstrates that $g_k^{-1}(\cdotp)$ is twice differentiable and concave.
\end{proof}
\bigskip

Using this definition of $g_k^{-1}(\cdotp)$, the cost function in problem (\ref{eqn_opt}) can be rewritten as
\begin{equation*}
f_k(\hat{P}_{eng,k}) = f_k( \hat{P}_{drv,k} - g_k^{-1}( \hat{P}_{b,k} )).
\end{equation*}

\begin{lemma} \label{lemma_1}
\textit{Under the assumptions on $f_k(\cdotp)$ (convex, non-decreasing), the function $f_k( \hat{P}_{drv,k} - g_k^{-1}( \hat{P}_{b,k} ))$ is convex and non-increasing with $\hat{P}_{b,k}$.}
\end{lemma}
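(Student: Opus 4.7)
The plan is to prove both properties by straightforward application of standard composition rules for convex functions, using the conclusions of the previous lemma about $g_k^{-1}(\cdotp)$ as black boxes.

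First, for monotonicity, I would define the inner function $\phi_k(\hat{P}_{b,k}) := \hat{P}_{drv,k} - g_k^{-1}(\hat{P}_{b,k})$. Since $\hat{P}_{drv,k}$ is independent of the optimisation variable and $g_k^{-1}$ is (strictly) increasing by the previous lemma, $\phi_k$ is (strictly) decreasing in $\hat{P}_{b,k}$. Composing with $f_k$, which is non-decreasing by assumption, the chain-rule argument (or an elementary monotonicity check) then yields that $f_k \circ \phi_k$ is non-increasing in $\hat{P}_{b,k}$.

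Second, for convexity, I would argue in two steps. By the previous lemma, $g_k^{-1}$ is concave, hence $-g_k^{-1}$ is convex, and adding the constant $\hat{P}_{drv,k}$ preserves convexity, so $\phi_k$ is a convex function of $\hat{P}_{b,k}$. Then I would invoke the standard composition rule: if $f_k$ is convex and non-decreasing and $\phi_k$ is convex, their composition $f_k \circ \phi_k$ is convex. This rule applies directly because both hypotheses on $f_k$ are given (convexity of the quadratic with nonnegative leading coefficient, and the non-decreasing property enforced by restricting the domain to $\hat{P}_{eng,k} \geq -\alpha_{1,k}/(2\alpha_{2,k})$ earlier in the section).

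The only subtle point to verify — not really an obstacle — is that the domain restrictions introduced at the start of the section are exactly what is needed to make the two hypotheses on $f_k$ and on $g_k^{-1}$ hold globally on the feasible set, so the composition rules apply without qualification. Everything else is textbook convex-analysis bookkeeping.
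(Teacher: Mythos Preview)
Your proposal is correct and follows essentially the same approach as the paper: both use that $\hat{P}_{drv,k} - g_k^{-1}(\cdot)$ is decreasing and convex (from concavity of $g_k^{-1}$), then combine this with $f_k$ being non-decreasing and convex via the standard composition rule. The only cosmetic difference is that the paper writes out the defining inequality chain for the composition explicitly, whereas you cite the rule as a black box.
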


\begin{proof}
The composition of a non-decreasing function $f_k(\cdotp)$ and decreasing function $ \hat{P}_{drv,k} - g_k^{-1}( \hat{P}_{b,k} )$ is necessarily non-increasing. The concave property of $g^{-1}_k(\cdotp)$ implies that for any $x_1, x_2 \in \mathcal{X}$ (where $\mathcal{X}$ is a domain such that $g^{-1}_k(x)$ is concave), and $\lambda \in [0,1]$,
\begin{align*}
&P_{drv,k} - g^{-1}_k\left( \lambda x_1 + (1-\lambda )x_2 \right) \\ 
&\quad\leq P_{drv,k} - \lambda g^{-1}_k(x_1) - (1-\lambda )g^{-1}_k(x_2).
\end{align*}
Therefore, using the properties that $f_k(\cdotp)$ is non-decreasing and convex, we have
\begin{align*}
& f_k\left( P_{drv,k} - g^{-1}_k\left( \lambda x_1 + (1-\lambda )x_2 \right) \right) \\
& \ \leq  f_k\left( P_{drv,k} - \lambda g^{-1}_k(x_1) - (1-\lambda )g^{-1}_k(x_2)  \right) \\
& \ \leq  \lambda f_k\left( P_{drv,k} - g^{-1}_k(x_1) \right) + (1-\lambda) f_k\left( P_{drv,k} - g^{-1}_k(x_2) \right) 
\end{align*}
for all $\lambda \in [0,1]$, which demonstrates that $f_k(\hat{P}_{drv,k} - g^{-1}(\hat{P}_{b,k}))$ is convex in $\hat{P}_{b,k}$.
\end{proof}
\bigskip
Problem (\ref{eqn_opt}) can now be re-written as
\begin{equation}\label{eqn_opt_reformulation}
\begin{aligned}
\hat{P}_{b}^\star = \arg \min_{\hat{P}_{b}} & \sum_{k\in\P} f_k( \hat{P}_{drv,k} - g^{-1}_k( \hat{P}_{b,k} ))  \\
\text{s.t.} \hspace{3mm} & \hspace{1mm} \hat{E}_0 = E(t) \\
&\begin{rcases} \hat{E}_{k+1} = \hat{E}_{k} -  \hat{P}_{b,k} \\ 
\underline{E} \leq \hat{E}_{k+1} \leq \overline{E} \\
\underline{\hat{P}}_{b,k} \leq \hat{P}_{b,k} \leq \overline{\hat{P}}_{b,k}
\end{rcases} \forall  k .
\end{aligned}
\end{equation}
This is now a convex optimisation problem, where 
\begin{equation*} \label{constraints_Pb}
\begin{aligned}
&\begin{drcases}
\underline{\hat{P}}_{b,k} = g_k( \underline{\hat{P}}_{em,k}^+ ) \\
\overline{\hat{P}}_{b,k} = g_k( \hat{P}_{drv,k} - \underline{\hat{P}}_{eng,k}^+ )
\end{drcases} k \in \P \\
&  \hspace{1mm} \underline{\hat{P}}_{b,k} = \overline{\hat{P}}_{b,k} = \begin{cases} g_k ( \hat{P}_{drv,k} ) & k \in \C \\ g_k ( \gamma \hat{P}_{drv,k} ) &k \in \B  \end{cases}.  
\end{aligned}
\end{equation*}
Existing analyses of convexity for similar optimal energy management problems replace the system dynamic equations, which appear in (\ref{eqn_opt}) as equality constraints, with inequalities, and then show that for a given solution the dynamic equations will be satisfied with equality \cite{Egardt2014}. The formulation presented here is superior for this application as it renders the dynamics as a linear system, which simplifies the implementation of convex optimisation algorithms.

\section{OPTIMISATION METHODS}

From Lemma \ref{lemma_1} it is known that $f_k( \hat{P}_{drv,k} - g_k^{-1}( \hat{P}_{b,k} ) )$ is non-increasing in $\hat{P}_{b,k}$, so it can be shown that if $\underline{E}_k \leq E_k - \overline{\hat{P}}_{b,k} \leq \overline{E}_k$ $\forall k$, then the minimising argument of (\ref{eqn_opt_reformulation}) is given trivially by $\hat{P}_b^\star = \overline{\hat{P}}_{b}$, and $\hat{P}_{eng,k}^\star = \hat{P}_{drv,k} - g_k^{-1}( \hat{P}_{b,k}^\star )$ for $k \in \P$. If this is not the case, an optimisation procedure must be applied.

Previous research on convex optimisation for the energy management problem has used general convex optimisation solvers to obtain the control inputs; here, we instead implement an ADMM algorithm similarly to \cite{East2018}. To demonstrate the relative performance characteristics of the algorithm we compare it in simulation with DP.

\subsection{Alternating Direction Method of Multipliers}

First, we introduce a dummy variable $\zeta$, and re-write (\ref{eqn_opt_reformulation}) as
\begin{equation}\label{eqn_opt_reformulation_2}
\begin{aligned}
\hat{P}_b^\star = \arg \min_{\hat{P}_{b}} & \sum_{k\in\P} f_k( \hat{P}_{drv,k} - g^{-1}_k( \hat{P}_{b,k} ))  + \Delta(\hat{P_b},\hat{E})  \\
\text{s.t.} \hspace{2mm} & \zeta = - \hat{P}_b \\
&\hat{E} = \Phi \hat{E}_0 + \Psi \zeta,
\end{aligned}
\end{equation}
where $\Phi$ is an $N-1$ column of ones, $\Psi$ is an $N-1 \times N-1$ lower triangular matrix of ones, and an indicator function is defined by
\begin{gather*}
\Delta(\hat{P_b},\hat{E}) = \sum_{k=0}^{N-1} \delta^{\hat{P}_b}_k(\hat{P}_{b,k})+ \sum_{k=1}^{N} \delta^{\hat{E}}_k(\hat{E}_{k}), \\
\delta^{x}_k(x_k) = \begin{cases} 0 & \underline{x}_k \leq x_k \leq \overline{x}_k \\ \infty &\text{otherwise}. \end{cases}
\end{gather*}
The augmented Lagrangian associated with problem (\ref{eqn_opt_reformulation_2}) is 
\begin{equation}\label{eqn_aug_lag}
\begin{aligned}
L(\hat{P}_b,\zeta,\hat{E},\lambda,\nu) = & \sum_{k\in\P} f_k( \hat{P}_{drv,k} - g^{-1}_k( \hat{P}_{b,k} ))+  \Delta(\hat{P_b},\hat{E})\\
 &  + \frac{\rho_1}{2} \|\hat{P}_b + \zeta + \nu \|^2  \\
& + \frac{\rho_2}{2}\|\Phi \hat{E}_0 + \Psi \zeta - \hat{E} + \lambda \|^2,
\end{aligned}
\end{equation}
where $\rho_1,\rho_2 >0$ are constants discussed in Section~\ref{sec:opt_params}.
We define projection functions for $x = [\, x_1 \ \cdots \ x_N]^\top$ as 
\begin{align*}
\pi^{x}_k(x_k) &= \min \{ \overline{x}_k, \max \{ \underline{x}_k , x_k \} \} , \\
\Pi^x(x) &= \begin{bmatrix} \pi^{x}_1(x_1) & \cdots & \pi^{x}_N(x_{N})\end{bmatrix}^\top,
\end{align*}
then the ADMM iteration is given by
\begin{subequations}\label{eqn_ADMM_iteration}
\begin{alignat*}{4}
&\hat{P}_{b,k}^{j+1} &&=&&  \pi_k^{\hat{P}_b} [ \arg \min_{\hat{P}_{b,k}} f_k( \hat{P}_{drv,k} - g^{-1}_k( \hat{P}_{b,k} )) \label{subeqn1}\\
& && &&+ \frac{\rho_1}{2}(\hat{P}_{b,k} + \zeta_k^j + \nu_k^j )^2 ] && \ \ k \in \P \\
&\hat{P}_{b,k}^{j+1} &&=&& \underline{\hat{P}}_{b,k} && \ \ k \notin \P \\
&\zeta^{j+1} &&=&& \left( \rho_1 I + \rho_2 \Psi^\top \Psi \right)^{-1}
[ -\rho_1(\hat{P}_b^{j+1} + \nu^j )  \\
& && && - \rho_2 \Psi^\top ( \Phi \hat{E}_0 - \hat{E}^j + \lambda^j ) ]  \\
&\hat{E}^{j+1} &&=&& \Pi^{\hat{E}} [ \Phi \hat{E}_0 + \Psi \zeta^{j+1} + \lambda^j ]  && \\
&\lambda^{j+1} &&=&& \lambda^j + \Phi \hat{E}_0 + \Psi \zeta^{j+1} - \hat{E}^{j+1}  \\
&\nu^{j+1} &&=&&  \nu^j + \hat{P}_b^{j+1} + \zeta^{j+1},
\end{alignat*}
\end{subequations}
which is initialised with the values
\begin{align*}
&\hat{P}_{b}^0 = \overline{\hat{P}}_{b}, \ \zeta^0 = -\hat{P}_b^0, \ \hat{E}^0 = \Pi^E \left( \Phi \hat{E}_0 + \Psi \zeta^0 \right) \\
&\lambda^0 = \Phi \hat{E}_0 + \Psi \zeta^0 - \hat{E}^0, \ \nu^0 = 0.
\end{align*}

The $\zeta$ variable is included in the formulation to ensure that the $\hat{P}_b$ update is separable, and the individual $\hat{P}_{b,k}$ updates are convex optimisation problems that we solve using an unconstrained Newton method with a backtracking line-search. The matrix inversion associated with the $\zeta$ update can be computed off-line as it involves no decision variables, and the remaining variable updates are trivial. This iteration is repeated until the residuals $\|r^{j+1}\|_2 \leq \epsilon$ and $\| s^{j+1} \|_2 \leq \epsilon$, where $\epsilon$ is the convergence threshold, and 
\begin{align*}
r^{j+1} &= \left[ \begin{array}{c} \hat{P}_{b}^{j+1} \\ \Phi \hat{E}_0  \end{array} \right] + \left[ \begin{array}{c c} I & 0 \\ \Psi & -I   \end{array} \right] \left[ \begin{array}{c} \zeta^{j+1} \\ \hat{E}^{j+1} \end{array} \right]  \\
s^{j+1} &= \left[ \begin{array}{c c} \rho_1 I & 0 \\ \rho_2 \Psi & -\rho_2I \end{array} \right] \left[ \begin{array}{c} \zeta^{j} - \zeta^{j+1} \\ \hat{E}^{j} - \hat{E}^{j+1} \end{array} \right].
\end{align*}
In \cite{East2018}, we demonstrate that this algorithm will converge to a point satisfying the first order conditions of problem (\ref{eqn_opt_reformulation}), which must be the optimal point in this convex formulation. The solution for problem (\ref{eqn_opt}) is then given by
\begin{align*}
\hat{P}^\star_{eng,k} &= \hat{P}_{drv,k} - g^{-1}_k( \hat{P}_{b,k}  ) \ k \in \P.
\end{align*}

\subsection{Dynamic Programming}
Although it is well known that DP is computationally expensive and unlikely to perform as well as a dedicated convex optimisation algorithm, we use it here to demonstrate baseline performance as it is commonly used to solve various forms of the energy management problem and is guaranteed to provide the optimal solution for a sufficiently high mesh density. 
In the absence of other tailored algorithms for this problem, a comparison with a general purpose convex optimization solver would be similarly flawed, as the results would be obfuscated by the performance of the differing software implementations. 
%

\begin{definition}
Define $J \in \mathbb{R}^{N_E \times N+1}$ as a matrix of costs, where $J[E,k]$ is the cost-to-go at state index $E=\underline{E},\dots,\overline{E}$ (we assume the values of $E$ are evenly spaced) and timestep index $k = 0,\dots,N$. The function $J( E,k)$ returns the cost-to-go, linearly interpolated from the available values of $E$ and $k$, and returns $\infty$ outside the feasible range of $E$. We use the notation $J[:,k]$ to refer to column $k$ of $J$.
\end{definition}
\begin{definition}
Define $U \in \mathbb{R}^{N_E \times N+1}$ as a matrix of optimal control inputs, where the value $U[E,k]$ is the optimal control input at state index $E$ and timestep index $k$. The function $U(E,k) $ returns the optimal control input linearly interpolated from the available values of $E$ and $k$, and the notation $U[:,k]$ refers to the column $k$ of $U$.
\end{definition}
\begin{definition}
Define $P_d \in \mathbb{R}^{N_P}$ as the vector with elements equal to the discrete, evenly-spaced values that may be assigned to the control input $P_{eng,k}$, such that $P_d = (\underline{P}_{eng,k}^+,\dots,\overline{P}_{eng,k})= (P_{d,1},\dots,P_{d,N_P})$ and $P_d[i]=P_{d,i}$.
\end{definition}

The algorithm is initialised by setting $U[:,N]=J[:,N]=0$, then for $E = \underline{E},\dots,\overline{E}$ the cost to go is calculated at $k=N-1,N-2,\dots,0$ by solving
\begin{align*}
J[E,k] = \begin{cases} \begin{aligned} &\min_i \big[f_k(P_d[i]) \\
&+ J ( E-g_k(P_d[i]) ,k+1) \big] \end{aligned} &k \in \P \\
f_k(P_d[1]) + J ( E-\underline{\hat{P}}_{b,k} ,k+1) &k \notin \P. 
\end{cases}
\end{align*}
The integer $i$ that minimises this function is found by evaluating all possible values, and selecting the minimum. The same method is used to evaluate
\begin{align*}
U[E,k] = \begin{cases} \begin{aligned} &\underset{i}{\arg \min} [f_k(P_d[i]) \\
&+ J (E-g_k(P_d[i]) ,k+1) ]\end{aligned}  &k \in \P \\
P_d[1] &k \notin \P. \end{cases}
\end{align*}
The optimal trajectory and control inputs can be found from 
\begin{equation*}
\begin{aligned}
\hat{E}_{k+1} &= \begin{cases} \hat{E}_{k} - g_k(U ( \hat{E}_{k},k )) & k \in \P \\
\hat{E}_k - \underline{\hat{P}}_{b,k} &k \notin \P \end{cases}, \\
\hat{P}_{eng,k}^* &= \begin{cases} U ( \hat{E}_{k},k ) & k \in \P \\
\underline{\hat{P}}_{eng,k} & k \notin \P \end{cases}.
\end{aligned}
\end{equation*}
for $k=0,\dots,N-1$.

\section{SIMULATIONS}\label{section_simulations}
We are interested in the comparative performance of ADMM and DP and not the properties of the MPC framework itself, so we only consider single-shot instances of the MPC optimisation with assumed driver demand profiles. 

We use the FTP-75 cycle to define a vector, $v$, of velocity samples, and generate a vector $\theta$, of gradient samples
\begin{equation*}
v = (v_0,\dots,v_T), \ \theta = (\theta_0,\dots,\theta_T), \ \theta_t = \begin{cases} \theta_s & t \leq T/2 \\ -\theta_s & t > T/2  \end{cases},
\end{equation*} 
for $t=0,\dots,T$ at 1Hz, where $\theta_s$ is a constant defined for each optimisation. The predicted velocity and gradient vectors used for each problem instance are given by
\begin{equation*}
\hat{v} = (v_{(1-\mu) T},\dots,v_T), \ \hat{\theta} = (\theta_{(1-\mu) T},\dots,\theta_T),
\end{equation*}
where $(1-\mu) T$ is rounded to the nearest integer, and $\mu \in [0,1]$. The predicted power demand is then calculated from (\ref{equation_driver_power_demand}). Essentially, $\theta_s$ is a variable used to modulate the magnitude of the predicted power, and $\mu$ is a variable used to modulate the length of the predicted power vector.

  We use a vehicle model to represent a 1800kg parallel plug-in hybrid passenger vehicle with a 100kW petrol internal combustion engine, a 50kW electric motor, and a 21.5Ah lithium-ion battery. We assume a fixed open circuit voltage of 350V and internal resistance of $0.1\Omega$ for the battery, and the fixed braking fraction, $\gamma$, is set at 0.4. An initial state of charge $\hat{E}_0= (50 + 10\mu)\%$ is used to represent battery discharge as the drive cycle is shortened, and upper and lower limits on state of charge are set as 62\% and 50\%.  

\subsection{Optimisation Parameters}\label{sec:opt_params}

  For DP we assume that $N_P = N_E/10$, as the feasible battery storage band is approximately an order of magnitude greater than the maximum power available from the motor, and this ensures that the $P_d$ and $J$ meshes have similar numerical density. We then use the value of $N_E$ to modulate the desired accuracy of the solution. From numerical experiments we find that $N_E=1000$ is sufficient to achieve a high accuracy approximation of the global minimum, and all further references to the global minimum/optimum refer to control inputs and state trajectories that are found this way.

For the ADMM algorithm, $\rho_1$ and $\rho_2$ must be tuned to match the dynamics of the optimisation problem, and we tune these parameters to the instance where $\theta_s=2^\circ$ and $\mu=1$. First we obtain the global minimum, then run 300 ADMM iterations and record a cost that measures the deviation of the fuel consumption vector at algorithm termination, $\hat{P}_f^\dagger$, from the global optimum fuel consumption vector, $\hat{P}_f^\star$, with
\begin{equation}\label{simulation_cost}
J = \sum_{k=0}^{N-1} | \hat{P}_{f,k}^\star - \hat{P}_{f,k}^\dagger |.
\end{equation}
The results shown in Figure \ref{figure3} illustrate that for this particular instance of the optimisation problem, variations of $\rho_1$ within two orders of magnitude and variations of $\rho_1$ within one order of magnitude induce little suboptimality in the resulting optimisation. We use the parameters $\rho_1=2.34 \times 10^{-4}$ and $\rho_2=8.86 \times 10^{-9}$ for subsequent ADMM optimisations. The value $\rho_2 \approx 10^{-8}$ may appear low, however it is appropriate given the relative scaling of the fuel consumption for each timestep ($\approx 10$kJ) and the battery capacity ($\approx 10$MJ) in (\ref{eqn_aug_lag}).

\begin{figure}
\begin{center}
\includegraphics[scale=0.85]{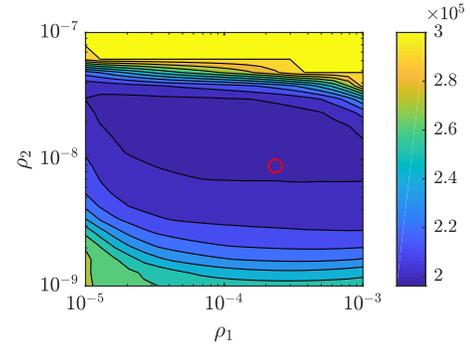}
\caption{$J$ evaluated for $10^{-5} \leq \rho_1 \leq 10^{-3}$ and $10^{-9} \leq \rho_2 \leq 10^{-7}$. The color scale is saturated at $J=3\times 10^{5}$, and the minimum value is highlighted by the red circle.  \label{figure3}}
\end{center}
\vspace{1mm}
\end{figure}

\subsection{Optimisation with variation in power demand}
A practically useful algorithm should demonstrate consistent performance across a broad range of optimisation scenarios, and to test this we generate drive cycles with varied magnitudes using $\mu=1$ and $-2^\circ \leq \theta_s \leq 2^\circ$, then solve using DP with $30 \leq N_E \leq 120$ and ADMM with $10^5 \leq \epsilon \leq 10^6$. These ranges of $N_E$ and $\epsilon$ are chosen to produce solutions with comparable accuracy that are computationally tractable. We run each instance of both algorithms 10 times and record the average time for completion and the deviation from the optimum cost measured by (\ref{simulation_cost}). The results are shown in Fig.~\ref{figure4}. 

\begin{figure}
\begin{center}
\includegraphics[scale=0.8]{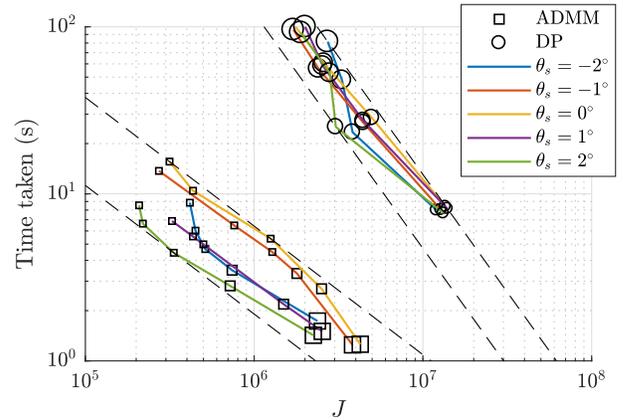}
\caption{ Mean time taken against optimality measured by $J$ with $\mu=1$. The ADMM marker size corresponds to the magnitude of $\epsilon$, and DP marker size corresponds to the magnitude of $N_E$. The dashed lines show the boundaries of the minimum width linear bands that contain the results. \label{figure4}}
\end{center}
\vspace{-4mm}
\end{figure}

The ADMM algorithm solves the optimisation fastest with $\theta_s=2^\circ$ (the cycle it is tuned to) and is slowest to converge for $\theta_s=0^\circ$. Between these scenarios, all results lie within a linear band with a maximum variation in computation time of less than an order of magnitude, displaying a degree of robustness to variations in drive cycle. Comparatively, the results for DP display more consistency as they are contained within a tighter band, but also show worse performance: for the range of parameters tested, DP takes approximately two orders of magnitude more time for a comparative level of accuracy. For example, using the $\theta_s=1^\circ$ cycle, ADMM terminates with $J=2.5\times 10^6$ after $1.5s$ with $\epsilon=10^6$, whereas DP takes $115.8s$ to achieve $J=2.0\times 10^6$ using $N_E=120$. The bands also diverge, suggesting that ADMM performs even better for higher accuracy, although it is not clear to what extent this relationship can be extrapolated.

\subsection{Optimisation with variation in prediction horizon}
To investigate robustness to variations in horizon length, we generate drive cycles with $\theta_s=-2^\circ$ and $\mu \in [0.25,1]$, then solve the optimisation and record parameters using the same method as above. The results are shown in Figure \ref{figure5}. 

\begin{figure}
\begin{center}
\includegraphics[scale=0.8]{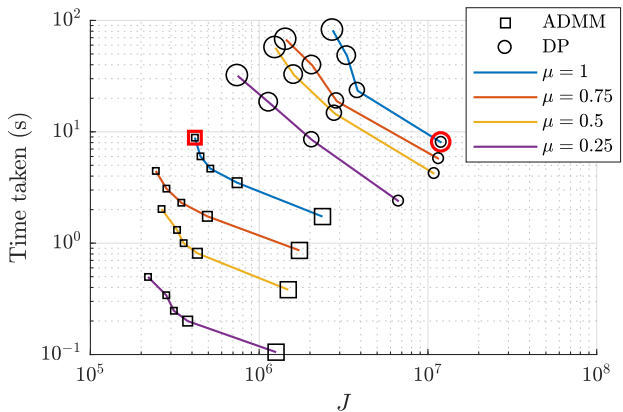}
\caption{Mean time taken against optimality measured by (\ref{simulation_cost}) with $\theta_s=2^\circ$. The size of the ADMM marker corresponds to the magnitude of $\epsilon$, and the size of the DP marker corresponds to the magnitude of $N_E$. The state trajectories for the points highlighted in red are shown in Figure \ref{figure6}.  \label{figure5}}
\end{center}
\end{figure}

It is shown that the time taken for the ADMM algorithm to terminate decreases dramatically as the length of the horizon is reduced. For example, the time taken with $\epsilon = 10^6$ is $1.8s$ for $\mu = 1$ and $0.9s$ for $\mu = 0.75$, then $0.1s$ for $\mu = 0.25$. As the time taken for DP decreases in a similar manner, the key observation is that the significant speed improvement with ADMM is sustained as the horizon is reduced. This suggests that if the algorithm can deliver a performance benefit for a broad range of drive cycle characteristics at the start of a journey, it will then also deliver the same performance benefit as the journey is completed.

In Figure \ref{figure6} we show state trajectories for the $\theta_s=-2^\circ$ and $\mu=1$ cycle that are obtained with DP and ADMM using a comparative level of computational time, as well as the global optimum. These plots clearly illustrate the superior performance of ADMM for this problem, but also show a significant limitation. Whilst the ADMM state trajectory follows the optimum much more closely, it is also shown to violate the SOC constraints by $\approx 0.02\%$. This is because there is no explicit guarantee in the ADMM termination criteria that the constraints are met: this is only ensured when the primal residual is exactly zero. We do not, however, believe that this is a significant problem with ADMM for this application, as for all optimisations shown in Figures \ref{figure4} and \ref{figure5}, no state of charge violation was greater that $1\%$, and this level of violation is likely to be lower in magnitude than the uncertainty in the current state of charge.

\begin{figure}
\begin{center}
\includegraphics[scale=0.8]{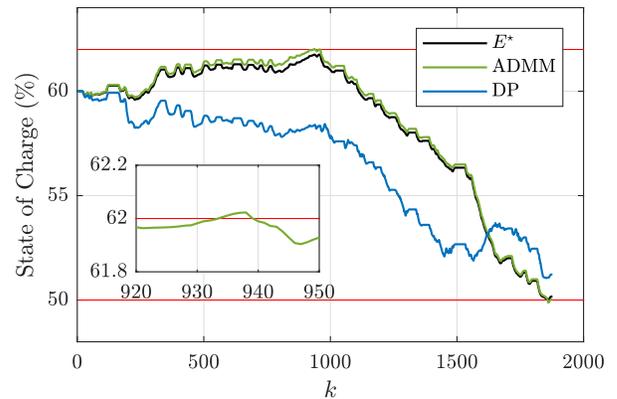}
\caption{Optimum state of charge trajectory plotted with ADMM and DP optimisations highlighted in red in Figure \ref{figure5}, where $\mu=1$, $\theta_s=-2^\circ$, $N_E = 120 $, and $\epsilon = 10^5$.  \label{figure6}}
\end{center}
\vspace{1.5mm}
\end{figure}


\section{CONCLUSION}

A convex formulation of the MPC optimisation associated with energy management in hybrid electric vehicles is presented, with a demonstration of convexity that allows the state dynamics to be modelled as a linear system. We propose an ADMM algorithm for the solution of this problem, and demonstrate approximately two orders of magnitude improvement in computation time compared to DP. A degree of robustness to variations in the magnitude and length of the predicted drive cycle is also demonstrated.

\bibliographystyle{ieeetr}
\bibliography{library}

\end{document}